\newtheorem{Thm}{Theorem}
\newtheorem{Prop}[Thm]{Proposition}
\newtheorem{Conj}[Thm]{Conjecture}
\theoremstyle{definition}
\theoremstyle{remark}
\theoremstyle{definition}
\title{Doubly Slice Odd Pretzel Knots}
\author{Clayton McDonald}
\address{Department of Mathematics, Boston College, 140 Commonwealth Avenue, Chestnut Hill, MA 02467
}
\email[Clayton McDonald]{mcdonafi@bc.edu}
\begin{document}

\maketitle

\begin{abstract}
We prove that an odd pretzel knot is doubly slice if it has $2n+1$ twist parameters consisting of $n+1$ copies of $a$ and $n$ copies of $-a$ for some odd integer $a$. Combined with the work of Issa and McCoy, it follows that these are the only doubly slice odd pretzel knots.
\end{abstract}
\vspace{0.3 in}
\section{Introduction}
Issa and McCoy \cite{issamccoy} give an almost complete classification of which odd pretzel knots $K$ are doubly slice by obstructing embeddings of their branched double covers $\Sigma_2(S^3,K)$ into $S^4$, generalizing work of Donald \cite{donald}. We denote a pretzel knot with $a_i$ twists in its $i$th twist box as $P(a_1,a_2,\dots,a_{n})$, with an odd pretzel knot being such a knot with only odd numbers of twists in each twist box and an odd number of parameters. Recall that a knot $K$ is (smoothly) \textbf{doubly slice} if it is the intersection of a (smoothly) unknotted 2-sphere in $S^4$ with a meridional $S^3$ (i.e.~an $S^3$ that bounds a $B^4$ on both sides). Alternatively, $K$ is doubly slice if there are two slice discs for $K$ whose concatenation is an unknotted $S^2 \subset S^4$.

\begin{Thm} \cite[Theorem 1.11]{issamccoy}
\label{imc}
If $K$ is an odd pretzel knot, then the following are equivalent:

\begin{enumerate}
\item $\Sigma_2(S^3,K)$ embeds smoothly in $S^4$,

\item $K$ is a mutant of a smoothly doubly slice pretzel knot, and
\item $K$ is a mutant of $P(a,-a,a,-a,\dots,a)$ for some odd $a$.
\end{enumerate}
\end{Thm}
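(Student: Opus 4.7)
The plan is to establish the three implications $(2) \Rightarrow (1) \Rightarrow (3) \Rightarrow (2)$. The first two are due to Issa and McCoy, so the substantive new content is the final implication $(3) \Rightarrow (2)$, which is the contribution of this paper.

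For $(2) \Rightarrow (1)$: if $K$ bounds a smoothly unknotted $S^2 \subset S^4$, the branched double cover of $S^4$ along this $S^2$ is again $S^4$, and the preimage of a meridional $S^3$ is $\Sigma_2(S^3, K)$, yielding a smooth embedding. Since mutation preserves the homeomorphism type of the branched double cover, the same embedding holds for mutants. For $(1) \Rightarrow (3)$: this is the main obstructive result of \cite{issamccoy}, which, generalizing Donald \cite{donald}, uses lattice-theoretic obstructions on intersection forms of 4-manifolds bounded by $\Sigma_2(S^3, K)$ to force the twist parameters into the form $(a, -a, a, -a, \dots, a)$.

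For the new implication $(3) \Rightarrow (2)$: because a Conway mutation can be performed in a neighborhood of a Conway sphere extending across an unknotted slicing $S^2$ in $S^4$, mutation preserves smooth double slicing, and it suffices to prove that $K = P(a, -a, a, -a, \dots, a)$ itself is doubly slice for every odd $a$. The plan is to construct two slice discs $D_+, D_- \subset B^4$ for $K$ whose union $D_+ \cup D_- \subset S^4$ is a smoothly unknotted 2-sphere. Exploiting the $a \leftrightarrow -a$ symmetry of the twist parameters, I attach bands between adjacent twist regions of opposite sign: one pairing cancels the inverse tangles to present $K$ as the result of band surgery on an unlink in $S^3$, and pushing the discs bounded by this unlink together with the bands into $B^4$ yields $D_+$; the complementary pairing produces $D_-$. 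The symmetry ensures both discs exist and match along $K$.

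The main obstacle is verifying that $D_+ \cup D_-$ is smoothly unknotted, which is delicate in the smooth category since Freedman-type topological criteria are not enough. The approach I would take is to set up a Kirby diagram for the complement $S^4 \setminus \nu(D_+ \cup D_-)$ and cancel handles, using the fact that the $a$ and $-a$ tangles are inverse in the tangle category, to reduce to the standard complement $S^1 \times B^3$ of the unknotted $S^2$. Equivalently, one may present the 2-sphere via a movie and cancel paired bands directly to obtain a round sphere. Either route will likely proceed by induction on the number $n$ of canceling pairs, with the base case $P(a, -a, a)$ serving as the model calculation.
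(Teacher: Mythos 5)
This statement is quoted verbatim from Issa and McCoy (\cite[Theorem 1.11]{issamccoy}); the paper gives no proof of it, only a citation, so there is no internal argument to compare yours against. Within your proposal, the directions $(2)\Rightarrow(1)$ and $(1)\Rightarrow(3)$ are handled reasonably: the first by the standard fact that the double branched cover of $S^4$ over an unknotted $S^2$ is $S^4$ together with mutation-invariance of $\Sigma_2(S^3,K)$, and the second by deferring to Issa--McCoy's obstruction, which is appropriate for a cited result.

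The genuine problem is in your treatment of $(3)\Rightarrow(2)$. You assert that ``mutation preserves smooth double slicing'' because a Conway sphere can be ``extended across'' the unknotted slicing sphere. This is unjustified and, as far as is known, false: there is no reason a Conway sphere for $K \subset S^3$ should intersect the unknotted $S^2 \subset S^4$ in a way that lets you perform the mutation ambiently while preserving unknottedness of the sphere. Indeed, if mutation preserved double sliceness, Theorem \ref{mainthm} of this paper --- its entire contribution --- would be an immediate corollary of the observation that $P(a,-a,\dots,a)$ is doubly slice, and no band-combinatorial argument would be needed. Fortunately your false lemma is also unnecessary for the implication as stated: condition $(2)$ only asks that $K$ be a \emph{mutant of} a doubly slice pretzel knot, so it suffices to know that $P(a,-a,\dots,a)$ itself is doubly slice, which is exactly what the rest of your sketch aims at. For that remaining construction, your plan to verify unknottedness of $D_+\cup D_-$ by ad hoc Kirby calculus is the weak point; the standard and workable route (Donald's criterion, Theorem \ref{crit} in this paper) is to check that attaching the two families of cancelling bands in either order yields unlinks with the correct number of components at every stage, whereupon Scharlemann's theorem on band moves producing unknots from split links lets one cancel all index-$1$ critical points and conclude the sphere is unknotted. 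Without that mechanism your induction on $n$ has no engine for the unknottedness step.
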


More specifically, Issa and McCoy establish in their proof of Theorem \ref{imc} that all mutants of $P(a,-a,a,-a,\dots,a)$ that are odd pretzel knots are odd pretzels whose parameters can be notated as a permutation of those of $P(a,-a,a,-a,\dots,a)$ (i.e.~a pretzel with exactly $n+1$ twist parameters equal to $a$ and $n$ twist parameters equal to $-a$).
They observe in their proof of Theorem \ref{imc} that the $P(a,-a,a,-a,\dots,a)$ are doubly slice, so what remains for a full classification is to check their mutants. We strengthen the conclusion of Theorem \ref{imc} by showing that all of these odd pretzel mutants of $P(a,-a,a,-a,\dots,a)$ are also doubly slice.
\begin{Thm}
\label{mainthm}
Any odd pretzel mutant of the odd pretzel knot $P(a,-a,a,-a,\dots,a)$ is doubly slice.
\end{Thm}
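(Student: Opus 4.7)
By the strengthening of Theorem~\ref{imc} recorded in the paragraph following its statement, the odd pretzel mutants of $P(a,-a,a,-a,\dots,a)$ are precisely the odd pretzel knots $K = P(b_1,\dots,b_{2n+1})$ whose twist-parameter multiset is $\{a^{n+1},(-a)^n\}$. My plan is to exhibit, for each such $K$, an unknotted 2-sphere $\Sigma \subset S^4$ with $\Sigma \cap S^3 = K$, by generalizing the explicit doubly-slicing of $P(a,-a,\dots,a)$ implicit in Issa and McCoy's proof.

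My intended approach is an induction on $n$. The base case $n = 1$ gives $K = P(a,-a,a)$ (the unique mutant up to pretzel symmetry when $n=1$), which is doubly slice by Issa and McCoy. For the inductive step, I use the combinatorial fact that any cyclic arrangement of $n+1$ copies of $+a$ and $n$ copies of $-a$ contains at least one adjacent pair $(b_i,b_{i+1}) \in \{(a,-a),(-a,a)\}$ by pigeonhole. The plan is to find a band construction in a neighborhood of such an adjacent pair that realizes a genus-reducing cobordism from $K$ to the smaller pretzel $K' = P(b_1,\dots,\widehat{b_i},\widehat{b_{i+1}},\dots,b_{2n+1})$, which has $2n-1$ parameters in the multiset $\{a^n,(-a)^{n-1}\}$. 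The bands above and below the pretzel plane, combined with the unknotted sphere $\Sigma'$ for $K'$ supplied by the inductive hypothesis, then give the desired sphere $\Sigma$ for $K$.

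I expect the principal obstacle to be constructing the band collection that reduces $K$ to $K'$ and verifying that the resulting $\Sigma$ is unknotted. A single saddle near the adjacent $(+a,-a)$ pair is unlikely to cancel both twist boxes (each contains $|a|$ crossings), so the construction will likely involve a carefully coordinated pair of bands on each side of the pretzel plane, arranged so that the induced cobordism is a standardly embedded punctured 2-sphere. The plan for the unknottedness of $\Sigma$ is a local argument: the band collection, together with the two cancelled twist boxes, should sit inside a 4-ball neighborhood as a trivially embedded surface, so that $\Sigma$ decomposes as a connected sum of $\Sigma'$ with a standard $S^2$, and is therefore unknotted by the inductive assumption. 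If this direct band approach proves intractable, a backup strategy is to prove that an adjacent swap of twist parameters --- a local pretzel mutation --- preserves the doubly-slice property by analyzing the mutation's interaction with the unknotted sphere, and iterate from the alternating case, which is reachable from any mutant by a sequence of adjacent swaps.
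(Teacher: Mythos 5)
Your proposal is a plan rather than a proof, and the gap sits exactly where you flag ``the principal obstacle.'' Two concrete problems. First, the local cobordism you want does exist and is simpler than you fear --- a single flat band attached just outside an adjacent $(a,-a)$ pair cancels both twist boxes at once --- but its output is not $K'$; it is $K' \sqcup U_1$, with an extra split unknotted component. Your induction must therefore also cap off and cancel this component on each side, so $\Sigma$ is not a one-saddle-per-side modification of $\Sigma'$. Second, and more seriously, the assertion that $\Sigma$ decomposes as $\Sigma' \# (\text{standard } S^2)$ is not a routine local verification. The tangle formed by the two boxes $(a,-a)$ is the tangle sum $\frac{1}{a} + \frac{1}{-a}$, which is \emph{not} isotopic rel endpoints to the trivial tangle (its double branched cover is a Seifert fibered space with two exceptional fibers, not a solid torus). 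Hence the lower half of your local surface --- trivial tangle at the bottom, the two-box tangle at level zero, one minimum and one saddle between them --- is not boundary-parallel in the lower half-slab, and the minimum cannot be cancelled against the saddle level-by-level; any trivialization must be a genuinely four-dimensional isotopy mixing the past and future halves. ``Should sit inside a 4-ball as a trivially embedded surface'' is the conclusion you need, not an argument for it. The backup strategy (adjacent mutation preserves double sliceness) is likewise unsupported and essentially equivalent in difficulty to the theorem.

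There is also a structural reason to distrust having the past band and the future band cancel the \emph{same} pair of boxes. The paper proves the theorem via Donald's criterion (Theorem \ref{crit}), which requires every intermediate link $K*\mathcal{A}*B_1*\cdots*B_k$ to be an unlink with exactly the right number of components, so that each saddle visibly cancels against a minimum; this forces the past and future families to pair each $(-a)$-box with \emph{different} $a$-boxes (counterclockwise versus clockwise after iterated cancellation), and the combinatorial heart of the argument (Proposition \ref{graphprop}) is that the resulting $2n$ pairings form a tree on the $2n+1$ boxes for every permutation. Your same-pair choice corresponds to a graph of $n$ doubled edges plus an isolated vertex: disconnected and full of $2$-cycles. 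Concretely, each future band then reattaches with both feet to the single split unknot created by the matching past band, so $K*\mathcal{A}*\mathcal{B}$ has $2n+1$ components rather than one, the fusion-band condition fails, and the intermediate links ($K'\sqcup U_1$ with $K'$ a nontrivial pretzel) are not unlinks, so neither Scharlemann's theorem nor Donald's criterion applies. Your raw ingredients are sound --- the pigeonhole adjacency observation is the same engine that drives the paper's iterative definition of $\mathcal{A}$, and your local cancelling band is the paper's Figure \ref{localmove} --- but the correct assembly is global, not inductive: define $\mathcal{A}$ by iterated counterclockwise cancellation, $\mathcal{B}$ by clockwise, and prove the auxiliary graph is a path.
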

We exhibit the double slicings of these mutants via particular sets of band attachments on these pretzel knots.

Combining Theorems \ref{imc} and \ref{mainthm}, we immediately obtain:

\begin{Thm}
For $K$ an odd pretzel knot, the following are equivalent:
\begin{enumerate}
\item $\Sigma_2(S^3,K)$ embeds in $S^4$,
\item $K$ is a doubly slice pretzel knot, and
\item $K$ is a mutant of $P(a,-a,a,-a,\dots,a)$ for some odd $a$. \qed
\end{enumerate}
\end{Thm}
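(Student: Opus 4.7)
The plan is to assemble the three-way equivalence as a cycle (1) $\Rightarrow$ (3) $\Rightarrow$ (2) $\Rightarrow$ (1), using Theorems \ref{imc} and \ref{mainthm} plus the classical fact that branched double covers of doubly slice knots embed in $S^4$.

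The implication (1) $\Rightarrow$ (3) is immediate: it is precisely the equivalence (1) $\Leftrightarrow$ (3) already asserted in Theorem \ref{imc}. For (3) $\Rightarrow$ (2), I would observe that the hypothesis of the theorem supplies that $K$ is an odd pretzel knot, while (3) supplies that $K$ is a mutant of $P(a,-a,a,-a,\dots,a)$. Therefore $K$ fits the hypothesis of Theorem \ref{mainthm} as an odd pretzel mutant of $P(a,-a,\dots,a)$, so $K$ is doubly slice; combined with the pretzel hypothesis, this yields (2).

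The remaining step (2) $\Rightarrow$ (1) is standard. If $K$ is doubly slice, fix an unknotted $S^2 \subset S^4$ and a meridional $S^3$ with $K = S^2 \cap S^3$. The $S^3$ splits $S^4$ into two $4$-balls, each meeting $S^2$ in a slice disc for $K$; taking $\bz/2$ branched double covers of each $4$-ball along its slice disc produces two compact $4$-manifolds glued along the common boundary $\Sigma_2(S^3,K)$, whose union is the branched double cover of $(S^4,S^2)$. Since $S^2$ is unknotted, this branched cover is $S^4$, and the gluing locus provides a smooth embedding of $\Sigma_2(S^3,K)$ into $S^4$. There is no real obstacle here: the substantive content lives entirely in Theorem \ref{imc} (the obstruction direction) and Theorem \ref{mainthm} (the new construction direction), and the only verification needed is that the mutant in (3) satisfies the pretzel-knot hypothesis of Theorem \ref{mainthm}, which is immediate from the standing hypothesis of the theorem itself.
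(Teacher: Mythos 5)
Your proposal is correct, and its skeleton is exactly the paper's: the theorem is stated with a \qed precisely because it is the immediate combination of Theorem \ref{imc} and Theorem \ref{mainthm}, and your implications (1) $\Rightarrow$ (3) and (3) $\Rightarrow$ (2) are the same two ingredients, including the right observation that the standing odd-pretzel hypothesis is what lets Theorem \ref{mainthm} apply to the mutant in (3). The one place you diverge is (2) $\Rightarrow$ (1): you re-derive the classical fact that a doubly slice knot has $\Sigma_2(S^3,K)$ embedding in $S^4$, via the branched double cover of $(S^4, \text{unknotted } S^2)$ being $S^4$ with the meridional $S^3$ lifting to $\Sigma_2(S^3,K)$. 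That argument is correct (and it is the topology underlying Issa--McCoy's obstruction direction in the first place), but the paper gets this implication for free from Theorem \ref{imc} itself: a doubly slice pretzel knot is trivially a mutant of itself, i.e.\ a mutant of a smoothly doubly slice pretzel knot, so condition (2) of Theorem \ref{imc} holds and hence $\Sigma_2(S^3,K)$ embeds. So your route spends a paragraph proving a standard fact the cited theorem already packages, while the paper's route is purely formal; what yours buys is self-containedness and an explicit picture of where the embedding comes from, at no cost in correctness.
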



\section{The Main Argument}

For a set of pairwise disjoint bands $S$ attached to a knot $K$, we denote the knot/link resulting from applying those band moves as $K*S$. 
Let $U_{n}$ denote the $n$ component unlink. For our construction of the double slicing sphere, we use the following criterion of Donald. This criterion is derived by repeated application of a theorem of Scharlemann \cite{sch} stating that the only band move up to isotopy on a split link that results in the unknot is the trivial band move on $U_2$, i.e.~a planar band move on a planar two component unlink. 
\begin{Thm}\cite[Corollary 2.5]{donald}
\label{crit}
Let $\mathcal{A} = \{A_1,\dots,A_n\}$ and $\mathcal{B} = \{B_1,\dots,B_n\}$ be two sets of $n$ bands for $K$ such that:
\begin{enumerate}
\item $K*\mathcal{A} = K*\mathcal{B} = U_{n+1}$,

\item and $K*\mathcal{A}*B_1*\dots*B_k = K*A_1*\dots*A_k*\mathcal{B} = U_{n+1-k}$, $\forall k$.

\end{enumerate}
Then $K$ is doubly slice. 
\end{Thm}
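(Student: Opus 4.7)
The plan is to apply Donald's band criterion (Theorem~\ref{crit}). Fix $K = P(a_1,\ldots,a_{2n+1})$, a pretzel knot whose parameters form a permutation of $(a,-a,a,-a,\ldots,a)$, so $n+1$ of the $a_i$ equal $a$ and $n$ equal $-a$. I would produce two sets $\mathcal{A}, \mathcal{B}$ of $n$ bands each satisfying the two conditions of the criterion.

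To construct the bands, I would first fix a bijective pairing matching each of the $n$ twist boxes labeled $-a$ with a distinct twist box labeled $a$, leaving one $a$-box unpaired. For each pair, I would attach one band $A_k$ on the ``front'' of the diagram and a second band $B_k$ on the ``back,'' each implementing a saddle move that joins the two paired twist boxes. The bands within a single family are drawn so as to be pairwise disjoint. The local model is that a single saddle across an $(a,-a)$ pair, after Reidemeister moves, transforms those two twist boxes into a trivial tangle split off from a small unknotted component.

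With the bands in hand, I would verify condition~(1): after applying all of $\mathcal{A}$, each paired $(a,-a)$ combination becomes a split unknotted component and the unpaired $a$-box yields the remaining unknotted component, giving $K*\mathcal{A} = U_{n+1}$, and symmetrically for $\mathcal{B}$. For condition~(2), I would check that after applying all of $\mathcal{A}$ and then $B_1,\ldots,B_k$, each additional $B_i$ band ($i \leq k$) fuses two existing unknotted components into a single one, because an $A$-band and its matching $B$-band on the same pair produce a compensating saddle, yielding $U_{n+1-k}$; the symmetric statement with the roles of $\mathcal{A}$ and $\mathcal{B}$ swapped follows from the front/back symmetry of the construction.

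The main obstacle is the arbitrariness of the permutation: in the standard pretzel $P(a,-a,\ldots,a)$ the pairing can be between adjacent boxes and the bands are local, but for a general mutant the $(-a)$-boxes may be scattered and the connecting bands must reach across intervening twist boxes while remaining disjoint. I would handle this either by exploiting the cyclic and reversal symmetries of pretzel diagrams to reorder so that paired boxes are effectively adjacent, or by induction on $n$, peeling off one canceling $(a,-a)$ pair at a time to reduce to a smaller pretzel mutant of the same form. Confirming pairwise disjointness of the bands and carefully tracking link types through the $2n$ possible partial band configurations constitute the technical heart of the argument.
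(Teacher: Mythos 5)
Your proposal does not address the statement you were asked to prove. Theorem~\ref{crit} is Donald's criterion itself: given an \emph{arbitrary} knot $K$ and two sets of bands $\mathcal{A}$, $\mathcal{B}$ already assumed to satisfy conditions (1) and (2), one must show that $K$ is doubly slice, i.e.\ construct an unknotted $2$-sphere in $S^4$ meeting a meridional $S^3$ in $K$. You have instead sketched an \emph{application} of that criterion to pretzel mutants of $P(a,-a,\ldots,a)$ --- essentially an outline of Theorem~\ref{mainthm} --- taking the criterion for granted. Nothing in your write-up explains why the hypotheses of the criterion force the resulting sphere to be unknotted, which is the entire content of the statement.

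The missing ideas are these. First, one realizes the bands as a cobordism in $S^3 \times [-1,1]$: attach the $\mathcal{A}$ bands going into the past and the $\mathcal{B}$ bands going into the future, so that $K \subset S^3 \times \{0\}$ is a cross-section of a cobordism from $K*\mathcal{A} = U_{n+1}$ to $K*\mathcal{B} = U_{n+1}$; capping both unlinks with trivial discs in the two $B^4$'s yields a sphere in $S^4$ with a Morse function whose index-$0$, $1$, $2$ critical points correspond to components of $K*\mathcal{A}$, bands, and components of $K*\mathcal{B}$. Second --- and this is where hypothesis (2) enters --- one invokes Scharlemann's theorem that the only band move on a split link producing an unknot is the trivial band move on a planar two-component unlink. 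Since $K*\mathcal{A}*B_1*\cdots*B_k$ is an unlink for every $k$, each successive $B_i$ attachment is trivial, so each $0$--$1$ critical point pair cancels by isotopy of the sphere; turning the Morse function upside down, the same argument with $K*A_1*\cdots*A_k*\mathcal{B}$ cancels the $1$--$2$ pairs. All index-$1$ critical points are removed, so the sphere is unknotted and $K$ is doubly slice. Your construction of explicit bands for pretzels, whatever its merits (and note that your ``compensating saddle'' count would in any case need the combinatorial acyclicity argument of Proposition~\ref{graphprop} to guarantee each band is a fusion band), cannot substitute for this Morse-theoretic argument, since the theorem quantifies over all knots and all band sets satisfying (1) and (2).
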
 
\begin{proof}[Sketch of Proof]

By imagining the $\mathcal{A}$ bands in the past and the $\mathcal{B}$ bands in the future, we can construct a cobordism in $S^3 \times [-1,1]$ from $K*\mathcal{A} = U_{n+1} \subset S^3 \times \{-1\}$ to $K*
\mathcal{B} = U_{n+1} \subset S^3 \times \{1\}$ with $K\subset S^3 \times \{0\}$ as a cross section.
By capping off this cobordism with unknotted discs in $B^4$, we obtain a Morse function on the associated sphere in $S^4$, with index 0 critical points corresponding to each unlinked component of $K*\mathcal{A}$, index 1 critical points to each band, and index 2 critical points to each unlink component of $K*\mathcal{B}$. For all $k$, $K*\mathcal{A}*B_1*\dots*B_k$ is an unlink, so by Scharlemann's theorem, each successive band attachment is a trivial attachment between two of the unlink components. We can cancel these corresponding 0-1 critical point pairs in the Morse function via isotopy of the 2-sphere. Similarly, if we turn the Morse function upside down and attach $A_1,\dots,A_k$ to $K*\mathcal{B}$, we get an unlink, so at each step the band attachments are also trivial by Scharlemann's theorem. Therefore, we can remove all of the index 1 critical points via isotopy, so our sphere must be unknotted. This sphere had $K$ as a cross section, so $K$ is doubly slice.
\end{proof}
To prove Theorem \ref{mainthm}, we examine the double slicing bands of $P(a,-a,a,-a,\dots,a)$ as in Theorem \ref{crit} and see how we can naturally generalize to the mutants corresponding to various permutations of the parameters. For the standard double slicing of $P(a,-a,a,-a,a)$, the bands are as in Figure \ref{fig:std}. Each of these bands effectively cancels two adjacent twist boxes of the pretzel that have inverse numbers of twists, at the cost of adding an extra unknotted, unlinked component (see Figure \ref{localmove}).

The $\mathcal{A}$ bands (on the outside) cancel the $-a$'s with the $a$'s counterclockwise adjacent to them, whereas the $\mathcal{B}$ bands (on the inside) cancel $-a$'s with the $a$'s clockwise adjacent to them.
To achieve such cancellations, we attach flat bands with feet directly outside the pair of twist boxes we are cancelling, like the bottom band in Figure \ref{localmove}.
The $\mathcal{A}$ bands are attached flatly in the unbounded region so that they do not cross, and the $\mathcal{B}$ bands are attached flatly in the central region so that they do not cross. In the case of a larger number of twist boxes, we simply extend this pattern to create a set of $\mathcal{A}$ and $\mathcal{B}$ bands for $P(a,-a,a,-a\dots,a)$.
As seen in Figure \ref{localmove}, the cancellations from one color of band can be done without moving the other bands. After these cancellations, the diagram of $K*\mathcal{A}$ is the disjoint union of a planar $U_n$ and the standard diagram of $P(a)$. Furthermore, the $\mathcal{B}$ bands are planar. 
Consequently, $K*\mathcal{A}*B_1*\dots*B_k$ and
$K*\mathcal{B}*A_1*\dots*A_k$ are unlinks for all $k$. Moreover, Scharlemann's theorem is unnecessary in this case to certify the conclusion of Theorem \ref{crit}, as every $\mathcal{B}$ band attachment to $K*\mathcal{A}$ is a trivial band attachment, as well as every $\mathcal{A}$ band attachment to $K*\mathcal{B}$. All that remains is to make sure each stage has the correct number of components. This is equivalent to verifying that every $\mathcal{A}$ band attachment to $K*\mathcal{B}$ is a fusion band, i.e. one that joins two distinct components, and likewise for every $\mathcal{B}$ band attachment to $K*\mathcal{A}$. It suffices to show that $K*\mathcal{A}*\mathcal{B}$ is the unknot, as we know $K*\mathcal{A} = K*\mathcal{B} = U_{n+1}$. The rest would follow because an oriented band move must change the number of components by exactly one.

\begin{figure}[htbp]
\begin{subfigure}{0.4\textwidth}
    \centering
    \includegraphics[width=0.8\textwidth]{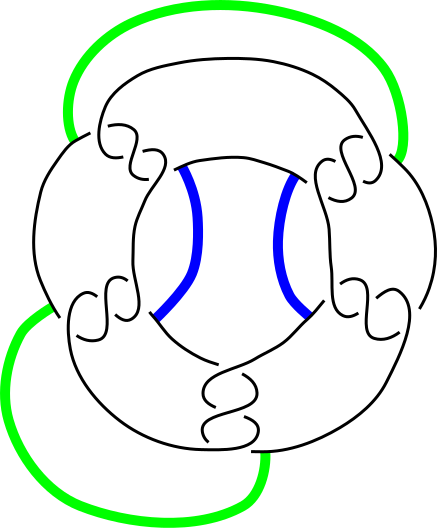}
\end{subfigure}
\hspace{1cm}
\begin{subfigure}{0.4\textwidth}
    \centering
    \labellist
    \small\hair 2pt
    \pinlabel $-$ at 35 100
    \pinlabel $+$ at 155 10
    \pinlabel $+$ at 280 100
    \pinlabel $+$ at 85 245
    \pinlabel $-$ at 240 245
    \endlabellist
    \includegraphics[width=0.8\textwidth]{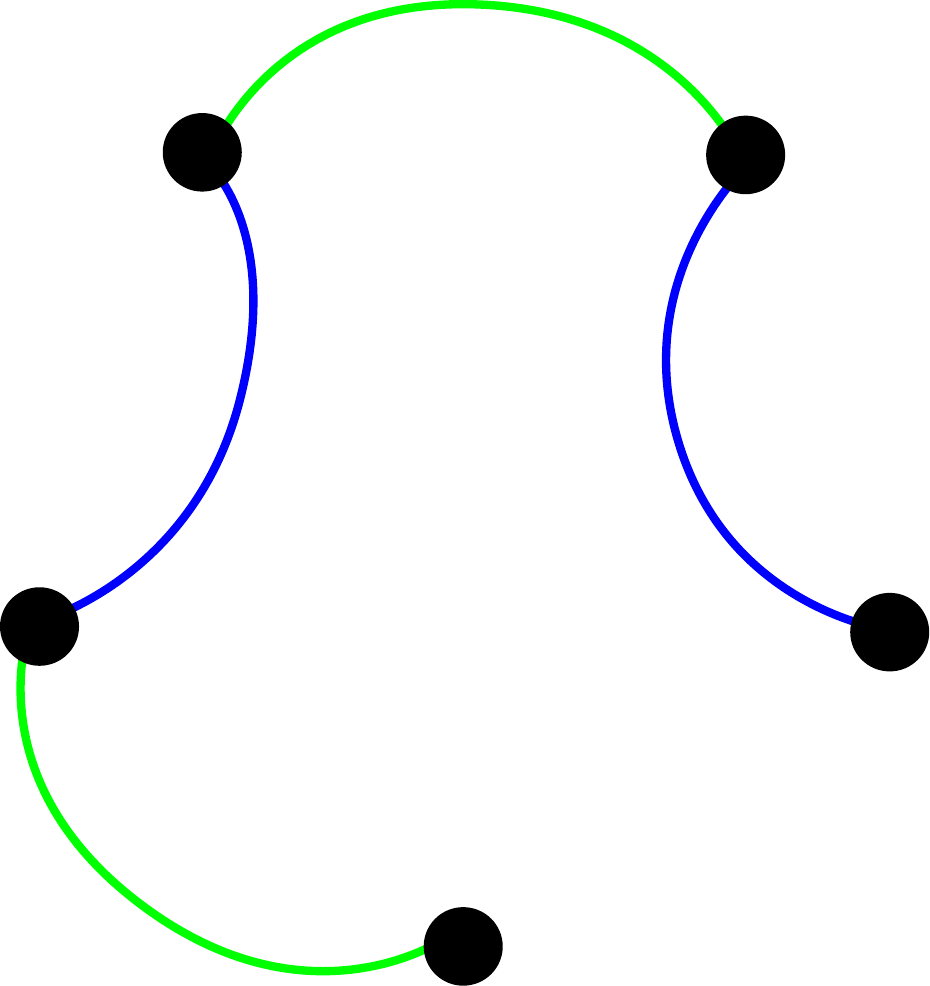}
\end{subfigure}
\caption{The double slicing bands of $P(3,-3,3,-3,3)$ and the auxiliary graph $G(+,-,+,-,+)$.}
\label{fig:std}
\end{figure}
We will use a similar set of bands and do a similar set of schematic cancellations of twist boxes for the more general permuted $a$,$-a$'s.
For the general case, our set of $\mathcal{A}$ bands is obtained by the following iterative procedure. As in the model example, attach the $\mathcal{A}$ bands as flat bands contained in the unbounded region of the standard planar diagram of the pretzel. First, add a band cancelling any $(-a)$-twist box with the $a$ counterclockwise adjacent to it, if there is one. Then, add bands that would cancel $(-a)$-twist boxes that have $a$-twist boxes counterclockwise adjacent to them after doing the cancellations from the previous step. Iterate this process until all of the $(-a)$-twist boxes are cancelled, leaving the one stranded pretzel knot $P(a)$, which is unknotted, along with one planar unknotted component for each band. The set of all $n$ such bands used is $\mathcal{A}$.

\begin{figure}[htbp]
    \centering
    \includegraphics[width = 0.8\textwidth]{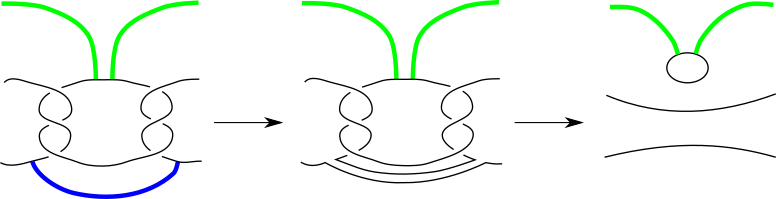}
    \caption{The bottom band move and a local isotopy of the resulting link while fixing the top bands. Note that this works for any odd number of twists.}
    \label{localmove}
\end{figure}

Our set of $\mathcal{B}$ bands is obtained by a similar process. As before we attach the $\mathcal{B}$ bands as flat bands in the central region of the planar diagram (see Figure \ref{fig:modelex}).
In this case, we recursively cancel our $(-a)$-twist boxes with the clockwise adjacent $a$-twist boxes. By the same reasoning as for $\mathcal{A}$, this set of band attachments yields an unlink. 
Note that in the case of $K = P(a,-a,a,-a,\dots,a)$, this procedure outputs the same sets of bands as in the model example.
To prove these band sets give us double slicings, we first ensure that $K*\mathcal{A}*B_1*\dots*B_k$ and $K*A_1*\dots*A_k*\mathcal{B}$ are unlinks for all $k$. This follows by the same logic as in the model example, with the cancellations being done in stages corresponding to the iterations in the definition of the bands. As in the model case, the cancellation of the handles can be seen directly from the diagram without an appeal to Scharlemann's theorem.

Thus we have verified condition $(1)$ and part of condition $(2)$ of Theorem \ref{crit}. In order to apply Theorem \ref{crit}, it remains to check that at each stage we not only have an unlink, but one with the correct number of components.
\begin{figure}[htbp]
\begin{subfigure}{0.4\textwidth}
    \centering
    \includegraphics[width=0.8\textwidth]{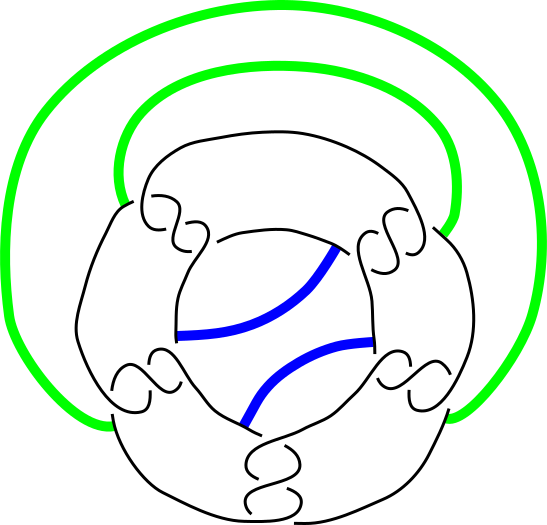}
    
\end{subfigure}
\hspace{1cm}
\begin{subfigure}{0.4\textwidth}
    \centering
    \labellist
    \small\hair 2pt
    \pinlabel $+$ at 35 100
    \pinlabel $+$ at 155 10
    \pinlabel $-$ at 280 100
    \pinlabel $+$ at 85 245
    \pinlabel $-$ at 240 245
    \endlabellist
    \includegraphics[width=0.8\textwidth]{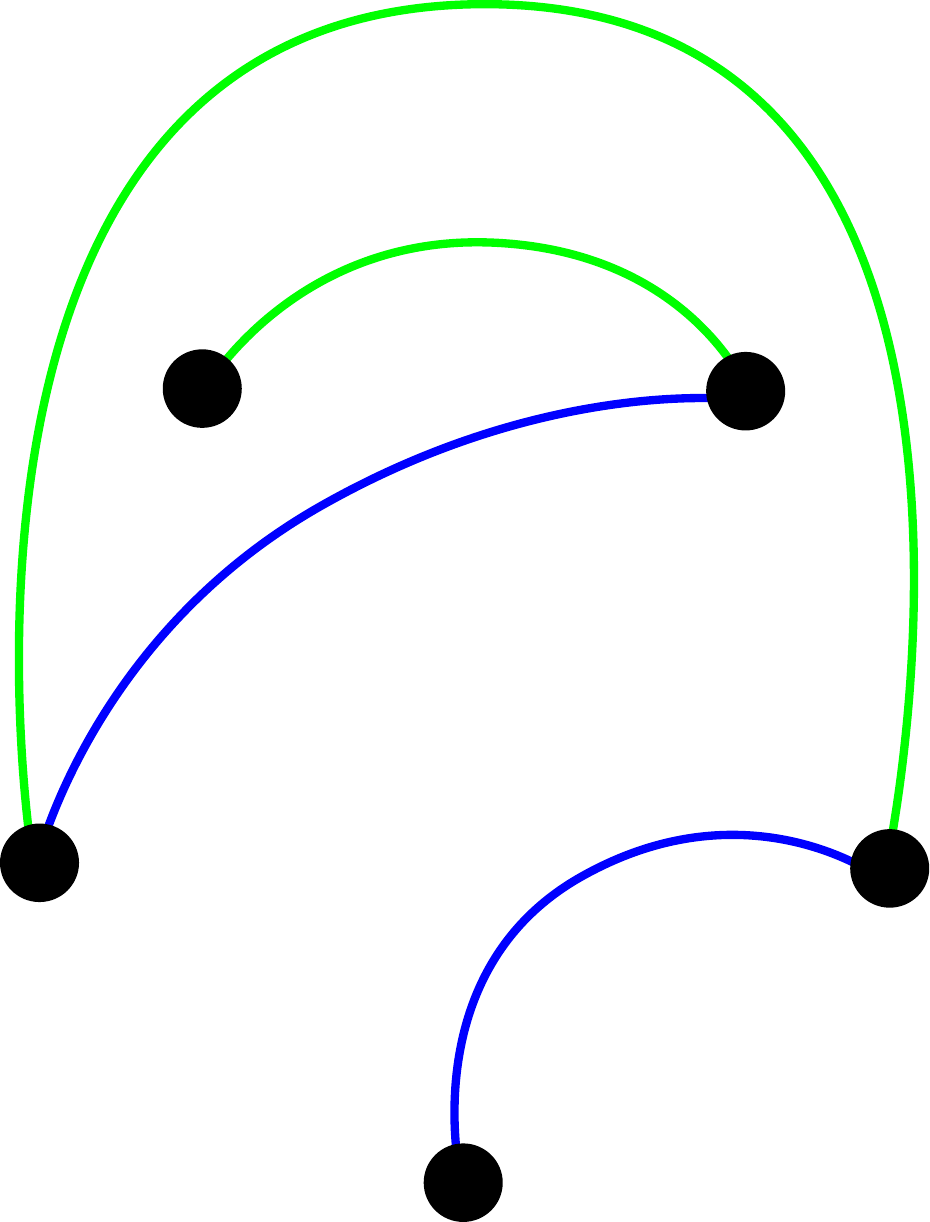}
\end{subfigure}
\caption{The double slicing bands of $P(3,3,3,-3,-3)$ and the auxiliary graph $G(+,+,+,-,-)$.}
\label{fig:modelex}
\end{figure}

For $n \in \mathbb{N}$ let $\underline{\mu}$ denote a $(2n+1)$-tuple $(\mu_1,\dots,\mu_{2n+1})\in \{\pm\}^{2n+1}$ with $(n+1)$ copies of $+$ and $n$ copies of $-$ among our $\mu_i$'s.
To encode the combinatorics of the band attachments corresponding to $K = P(\mu_1a,\dots,\mu_{2n+1}a)$, we construct a graph
$G(\underline{\mu})$.   The vertex set $V_K$ consists of $2n+1$ cyclically ordered vertices, one for each twist box, and has a natural partition into $V_{+}$ and $V_{-}$, corresponding to the $a$ and $(-a)$-twist boxes respectively. The edge set is partitioned into two sets of $n$ edges, one set $E_A$
with adjacency according to the pairings of the twist boxes via the $\mathcal{A}$ bands, and one set $E_B$
with adjacency according to the $\mathcal{B}$ bands. Note that the connected components of $K*\mathcal{A}$ correspond exactly to the connected components of $(V_K,E_A)$. 
Each pair of vertices in $(V_K,E_A)$ joined by an edge corresponds to the unknotted component formed by the cancellation of the corresponding twist boxes, and the one unpaired vertex corresponds to the remaining $P(a)$ component.

The foot of a $\mathcal{B}$ band attaches clockwise adjacent to the $a$-twist box it is cancelling, and counterclockwise adjacent to the $(-a)$-twist box it is cancelling. The foot of an $\mathcal{A}$ band attaches clockwise adjacent to the $(-a)$-twist box it is cancelling, and counterclockwise adjacent to the $a$-twist box it is cancelling.
Therefore, a $\mathcal{B}$ band attached directly adjacent to a given twist box attaches to the unknotted component formed by the $\mathcal{A}$ band cancelling that twist box. On the other hand, the $\mathcal{B}$ band will attach to the remaining $P(a)$ if its edge in $E_B$ attaches to the unique unpaired vertex in $(V_K,E_A)$. Therefore, if there are no cycles in $G(\underline\mu)$, then every $\mathcal{B}$ band is attaching two separate components of $K *  \mathcal{A}$, so every band is a fusion band. 
Because $G(\underline\mu)$ has $2n+1$ vertices and $2n$ edges, it has no cycles if and only if it is connected, which would require it to be a tree.
This would mean that condition $(2)$ of Theorem \ref{crit} is satisfied for $K$, $\mathcal{A}$, and  $\mathcal{B}$, so $K$ would be doubly slice.
In the case of $K = P(a,-a,a,-a,\dots,a)$, elements of $V_{-}$ are connected to cyclically adjacent elements of $V_{+}$, so connectedness of $G(\underline\mu)$ is clear (compare the two sides of Figure~\ref{fig:std}). It then suffices for the proof of Theorem \ref{mainthm} in the general case to prove the following:


\begin{Prop}
\label{graphprop}
For any $\underline{\mu} = (\mu_1,\dots,\mu_{2n+1})$, $\mu_i \in \{\pm\}$, $n+1$ of which are $+$, $G(\underline\mu)$ is a path. 
\end{Prop}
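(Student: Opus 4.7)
The plan is to verify three structural properties of $G(\underline\mu)$: (1) every vertex has degree at most $2$; (2) exactly two vertices have degree $1$, namely two distinct $+$-vertices $a^*$ (the $+$ unpaired by $\mathcal{A}$) and $b^*$ (the $+$ unpaired by $\mathcal{B}$); and (3) $G(\underline\mu)$ is acyclic. Since $|V_K| - (|E_A|+|E_B|) = 1$, properties (1)--(3) force $G(\underline\mu)$ to be a tree of maximum degree $2$ with endpoints $a^*$ and $b^*$---that is, a path.

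For (1) and (2), note that each $v \in V_-$ contributes exactly one edge to each of $E_A$ and $E_B$, hence has degree $2$, while each $v \in V_+$ has degree at most $2$; edge counting then forces exactly two $+$-vertices to be ``deficient,'' and these are $a^*$ and $b^*$. To show $a^* \ne b^*$, I would characterize the iterative $\mathcal A$-procedure as cyclic bracket matching with $-$ as ``open'' and $+$ as ``close'' read counterclockwise; the unique leftover $+$ is then characterized by the property that, reading counterclockwise from the vertex immediately after it, every partial sum of $\#(+) - \#(-)$ is $\leq 0$ and terminates at $0$ after $2n$ steps. Symmetrically, $b^*$ is characterized by every such partial sum being $\geq 0$. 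If $a^* = b^*$ these sums would vanish identically, but the first one equals $\pm 1$, a contradiction for $n \geq 1$.

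The substantive step is (3). Suppose for contradiction that $C = u_0 u_1 \cdots u_{2k-1} u_0$ is a cycle in $G(\underline\mu)$; necessarily its edges alternate between $E_A$ and $E_B$, and its vertices alternate between $V_+$ and $V_-$. Orient each $\mathcal A$-edge counterclockwise from its $-$-endpoint to its $+$-endpoint, and each $\mathcal B$-edge counterclockwise from its $+$-endpoint to its $-$-endpoint. The open counterclockwise arc from tail to head of each such oriented edge---its ``balanced arc''---contains equally many $+$ and $-$ vertices, since the iterative matching pairs them all internally there. Under these orientations each $u_i$ is the tail of exactly one cycle edge and the head of exactly one, so the multiplicity function
\[
m(x) \;:=\; \#\{\, e \in E(C) : x \in \text{balanced arc of } e \,\}
\]
has zero net change at every cycle vertex; hence $m$ is constant on $S^1 \setminus V(C)$ with value some $m_0$, and a direct jump count shows each cycle vertex lies in $m_0 - 1$ balanced arcs, forcing $m_0 \geq 1$. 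Setting $h(+) = 1$ and $h(-) = -1$ and summing the balance identity $\sum_{x \in \mathrm{arc}_e} h(x) = 0$ over all edges of $C$ gives
\[
0 \;=\; \sum_{x \in V_K} h(x)\, m(x) \;=\; m_0\!\!\sum_{x \notin V(C)}\!\! h(x) \;+\; (m_0-1)\!\!\sum_{x \in V(C)}\!\! h(x) \;=\; m_0,
\]
using that $\sum_{V_K} h = 1$ while $C$ contains equally many $+$ and $-$ vertices. This forces $m_0 = 0$, contradicting $m_0 \geq 1$.

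The main obstacle is setting up step (3) correctly: verifying the constancy of $m$ and the nonnegativity $m_0 \geq 1$ both reduce to the local observation that at each cycle vertex exactly one cycle edge has it as tail and one as head, which is immediate from the sign-based orientation convention once one checks the two cases $u_i \in V_+$ and $u_i \in V_-$ separately. Once (1)--(3) are in hand, $G(\underline\mu)$ is a tree of maximum degree $2$ with two degree-$1$ vertices, and hence a path from $a^*$ to $b^*$.
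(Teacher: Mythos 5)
Your proof is correct and takes essentially the same route as the paper's: orient $E_A$ from $-$ to $+$ and $E_B$ from $+$ to $-$, establish that each edge spans a counterclockwise arc containing equally many $+$ and $-$ vertices (your ``balanced arc'' is the paper's $V(e)$, which the paper justifies by induction on the cancellation procedure where you invoke cyclic bracket matching), and derive a contradiction from a hypothetical cycle by observing that the arcs would tile the circle an integral number of times while $|V_+|>|V_-|$. Your multiplicity function $m$ just makes explicit the paper's phrase ``this union is a multiple of the full cyclic order,'' and your step (2) identifying the two endpoints is redundant once acyclicity is known.
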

\begin{proof}[Proof of Proposition]
Because every vertex has degree at most two and there are $2n$ edges for $2n+1$ vertices, it is equivalent to show that there are no cycles in $G(\underline\mu)$. 

Orient the edges of the graph such that $E_A$ edges travel from $V_-$ to $V_+$ and $E_B$ edges travel from $V_+$ to $V_-$. Next, define $V(e)$ for an edge $e$ as the subset of $V$ containing every vertex strictly between $e$'s endpoints starting at $e$'s initial endpoint and moving counterclockwise through the cyclic order.

For $v_- \in V_-$ and $e \in E_A$ attached to $v_-$, we claim that the other endpoint of $e$ is the first $v_+ \in V_+$ moving counterclockwise from $v_-$ such that $V(e)$ would contain the same number of $V_+$ and $V_-$ vertices.
We prove this claim by induction on the counterclockwise distance from a given vertex $v_- \in V_-$ to $v_+$, the $V_+$ vertex it is paired to via $E_A$. The base case of a counterclockwise adjacent $v_+$ is immediate from the recursive definition. Now suppose that $v_-$ is not cyclically adjacent to $v_+$, and that the claim holds for any such graph and pair $(v_-,v_+)$ therein whose cyclic distance is smaller.
The closest vertex to $v_-$ in this gap must also be in $V_-$, otherwise this would be the vertex $v_-$ pairs to.
Similarly, the closest vertex to $v_+$ in this gap must be in $V_+$. Therefore, there must be a point in this gap where there is a $V_-$ vertex with a $V_+$ vertex counterclockwise adjacent to it. The two corresponding twist boxes will cancel in our recursive process, leaving a graph with smaller gap between the two marked vertices, so by the inductive step we know that the recursive adjacency definition would pair them together. 
Similarly, for $v_+ \in V_+$ and $e \in E_B$ attached to $v_+$ (if there is such an edge)  the other endpoint of $e$ is the first $v_- \in V_-$ moving counterclockwise from $v_+$ such that $V(e)$ would contain an equal number of $V_+$ and $V_-$ vertices.

Now assume by way of contradiction that there exists a cycle in  $G(\underline\mu)$. 
Because any vertex can have at most one $E_A$ edge and one $E_B$ edge attached to it, any cycle must alternate $E_A$ and $E_B$ edges, meaning that any cycle must alternate between $V_+$ and $V_-$ vertices.
For every edge $e$, $V(e)$ contains an equal number of $V_+$ and $V_-$ vertices in their interiors.
The endpoints of these $V(e)$ alternate between $V_+$ and $V_-$.
Therefore, the union of these $V(e)$, along with their endpoints, contains the same number of $V_+$ and $V_-$ vertices, counted with multiplicity.

On the other hand, the edges together form a cycle, so this union is a multiple of the full (counterclockwise) cyclic order.
Since $V_+$ is a larger set than $V_-$, the multiple must be zero.
This is a contradiction and means the graph is a path.
\end{proof}
\begin{proof}[Proof of Theorem \ref{mainthm}]
As already discussed, our knot and bands satisfy condition $(1)$ of Theorem \ref{crit} and $K*\mathcal{A}*B_1*\dots*B_k$ and $K*A_1*\dots*A_k*\mathcal{B}$ are unlinks for all $k$. By Proposition \ref{graphprop}, every $\mathcal{B}$ band is a fusion band to $K*\mathcal{A}$, so   $K*\mathcal{A}*\mathcal{B} = U_{1}$. This means that condition $(2)$ is satisfied as well, as (oriented) band moves change the number of components in a link by exactly one. Therefore, $K$ is doubly slice by Theorem \ref{crit}.
\end{proof}

\section{From Knots to Links}
We close with a remark about the case of links. For the odd pretzels with an even number of twist boxes, we have two component links instead of knots. In this case, there is some ambiguity in the definition of double slicing. One could either mean that the link is the cross section of an unknotted sphere in $S^4$, or a two component 2-unlink. The second definition, which we call {\bf strongly doubly slice}, is in some ways more natural, as it consists of a concatenation of two pairs of slice discs for the link. However, the first definition, which we call {\bf weakly doubly slice}, aligns more nicely with the obstructions coming from embeddings of branched covers into $S^4$, as the cyclic covers of $S^4$ branched over a two component 2-unlink are not $S^4$. However, strongly doubly slice implies weakly doubly slice, as one could tube the unknotted spheres from strong double slicing together to exhibit a weak double slicing. Therefore, one could obstruct strong double slicings by obstructing weak double slicings via a similar method to Issa and McCoy. Such a program, however, would fail to see the difference between the two properties. We give an example illustrating that these two notions are different:

\begin{Prop} \label{difference}
There exists a link that is weakly doubly slice but not strongly doubly slice.
\end{Prop}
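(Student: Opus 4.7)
The plan is to exhibit a $2$-component link that is weakly doubly slice but not strongly doubly slice. The key observation is that if $L = L_1 \sqcup L_2$ is strongly doubly slice, then $L$ is the equatorial cross section of a $2$-component unlink of unknotted $2$-spheres in $S^4$, so each $L_i$ is itself the cross section of one of these unknotted spheres and hence is a doubly slice knot. It therefore suffices to produce a weakly doubly slice $2$-component link one of whose components is slice but not doubly slice.

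Let $K$ be a knot that is slice but not doubly slice; for concreteness, take the Stevedore knot $6_1$. Its double branched cover is the lens space $L(9,2)$, which does not embed smoothly in $S^4$; since a doubly slice knot must have its double branched cover embed in $S^4$ (the classical obstruction underlying Theorem~\ref{imc}), $6_1$ is not doubly slice. Let $-K^r$ denote the reverse mirror of $K$, which likewise is slice but not doubly slice, and let $L = K \sqcup -K^r$ be the split $2$-component link. Then by the observation above, $L$ cannot be strongly doubly slice.

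To show $L$ is weakly doubly slice, I would invoke the classical result (due to Sumners, and closely related to Zeeman's theorem on $1$-twist spins) that $K \# -K^r$ is doubly slice for every knot $K$. Let $\Sigma \subset S^4$ be a resulting unknotted $2$-sphere realizing this, with $K \# -K^r$ as its equatorial cross section, and write $D_\pm = \Sigma \cap B^4_\pm$ for the corresponding slice disks. Using that $K$ and $-K^r$ are themselves slice, I would arrange $D_-$ via a standard banded-surface manipulation so that it contains, at some level just inside the equator, a saddle critical point realizing the connect-sum band $b \subset S^3$, with the level sets of $D_-$ just below that saddle given by two disjoint slice disks for $K$ and $-K^r$. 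An ambient isotopy of $\Sigma$ sliding this saddle just past the equatorial $S^3$ then produces an isotopic (and hence still unknotted) $2$-sphere whose new equatorial cross section is $L = K \sqcup -K^r$, exhibiting $L$ as weakly doubly slice.

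The main technical point is the rearrangement of $D_-$ to exhibit the band $b$ as a saddle just inside the equator; this is a standard Morse-theoretic/banded-surface manipulation in the $4$-ball, enabled by the availability of slice disks for $K$ and for $-K^r$ to serve as the building blocks of $D_-$ below that saddle.
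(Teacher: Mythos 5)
Your first half is correct and is exactly the paper's reduction: the components of a strongly doubly slice link are themselves cross sections of unknotted spheres, hence doubly slice, and the stevedore knot is slice but not doubly slice (its double branched cover is a lens space with first homology $\mathbb{Z}/9$, which is not of the form $A\oplus A$ and so cannot embed in $S^4$). The gap is in the second half. The double sliceness of $K\#-K^r$ hands you one particular unknotted sphere $\Sigma=D_+\cup D_-$, and your argument needs the \emph{specific} disc $D_-$ to be ambiently isotopic rel boundary to a disc of the form (splitting band $b$) followed by two disjoint slice discs for $K$ and $-K^r$. That is not a standard banded-surface manipulation: a slice disc for a connected sum need not decompose as a splitting saddle capped by split slice discs for the summands, and nothing in the construction of the double slicing of $K\#-K^r$ provides such a decomposition of this particular $D_-$. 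If instead you simply \emph{replace} $D_-$ by a disc of that form (which does exist, since $K$ and $-K^r$ are slice), you get a new sphere $D_+\cup b\cup(D_1\sqcup D_2)$ with $K\sqcup -K^r$ as a cross section, but you have lost all control of unknottedness: unknottedness of $D_+\cup D_-$ says nothing about $D_+\cup D_-'$ for a different slice disc $D_-'$. So the weak double slicing of the split link $K\sqcup -K^r$ is not established, and it is not clear that this link is weakly doubly slice at all.

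The paper avoids this entirely by choosing a different link: the $0$-framed $2$-cable $L$ of the stevedore knot $K$. Taking a slice disc $D$ for $K$ and the interval sub-bundle $D\times[-1,1]$ of its normal bundle produces an embedded $3$-ball in $B^4\subset S^4$; its boundary $2$-sphere is unknotted for free, because it visibly bounds an embedded $3$-ball, and it meets a slightly pushed-in $S^3$ in $L$. Both components of $L$ are copies of $K$, so $L$ is not strongly doubly slice. The lesson is that unknottedness of the ambient sphere is the delicate point, and the paper's example is engineered so that it is automatic; to salvage your example you would need an independent certificate that $K\sqcup -K^r$ bounds something playing the role of that $3$-ball, which your saddle-sliding argument does not supply.
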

\begin{proof}
For a strongly doubly slice link, each of the components must be doubly slice, as otherwise they could not individually be slices of unknotted spheres. Therefore, giving an example of a weakly doubly slice link with non-doubly slice components would suffice for a proof. Consider $L$, the $0$-framed $2$-cable on a slice but non-doubly slice knot $K \subset S^3$ (for example, the stevedore knot). Take a slice disk $D$ with boundary $K$, and form an interval sub-bundle of its trivial normal bundle. This interval sub-bundle is a $3$-ball whose boundary intersects $S^3_\epsilon$, a slight push-in of $S^3$ into $B^4$, in $L$. Therefore, $L$ is weakly doubly slice, but not strongly doubly slice.
\end{proof}

More generally, we conjecture that there exists a link with doubly slice components that is weakly doubly slice but not strongly doubly slice.

On the constructive side, there is an equivalent version of Theorem \ref{crit} for two component links, which gives strong double slicings for links. One could then see if a similar set of band attachments would give double slicings for the mutants of $P(a,-a,\dots,a,-a)$. In this case, we would need to attach two sets of $n-1$ bands, as the remaining $P(a,-a)$ would be a two component unlink. There are an equal number of $a$ and $(-a)$-twist boxes, so all twist boxes can be paired in both the clockwise and counterclockwise direction. Therefore, there is sometimes ambiguity as to which $n-1$ of the $n$ bands one should choose. The corresponding graph for $L$ has $2n$ vertices and two sets of $n$ edges for the clockwise and counterclockwise pairings, with one edge removed from each set of $n$.

For $L$ a mutant of $P(a,-a,\dots,a,-a)$, $L*\mathcal{A}*B_1*\dots*B_k$ and $L*\mathcal{B}*A_1*\dots*A_k$ are unlinks by the same logic as before. Therefore, it simply remains to show that the unlinks have the correct number of components. However, the cycle free property of the associated graph does not necessarily imply that the set of bands we choose is a double slicing. This is because after one attaches a set of $n-1$ bands, edge attachments to the remaining two unpaired vertices do not correspond to band attachments to the two components of $P(a,-a)$. Instead, attaching edges to either of the two remaining vertices corresponds to attaching bands to the same unknotted component of $P(a,-a)$. Because of this, the only mutant whose ribbon discs from this procedure concatenate properly is $P(a,-a,\dots,a,-a)$, which we conjecture is the only family.

\begin{Conj}
The only strongly doubly slice odd pretzel links are of the form $P(a,-a,\dots,a,-a)$ for some integer $a$.
\end{Conj}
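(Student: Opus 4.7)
The plan is to mirror the strategy used to prove Theorem~\ref{mainthm}: first reduce to mutants of $P(a,-a,\dots,a,-a)$ via a branched-cover embedding obstruction, and then handle the mutants. Since strong double slicing implies weak double slicing, and weak double slicing of $L$ implies that $\Sigma_2(S^3, L)$ embeds in $\Sigma_2(S^4, S^2) = S^4$, the same lattice-embedding machinery underlying the proof of Theorem~\ref{imc} should apply, up to the necessary modifications for links, to reduce the conjecture to the case that $L$ is a mutant of $P(a,-a,\dots,a,-a)$. The remaining task is then to show that no non-alternating odd pretzel mutant of $P(a,-a,\dots,a,-a)$ is strongly doubly slice.

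For this final step, the simple obstruction that each component of a strongly doubly slice link be doubly slice is of no help: tracing shows that the two strands through each twist box of an odd pretzel link belong to different components, so each individual component of $P(\mu_1 a,\dots,\mu_{2n} a)$ is already an unknot, appearing crossingless in the standard pretzel diagram. My proposal is therefore to upgrade the branched-cover embedding obstruction from weak to strong. A strongly doubly slice $L$ gives an embedding of $\Sigma_2(S^3, L)$ into $\Sigma_2(S^4, S^2 \sqcup S^2)$ that moreover splits this ambient $4$-manifold into two pieces, each the double branched cover of $B^4$ over two \emph{disjoint} slice disks. Requiring the disks to be disjoint (rather than a connected slice surface of genus one, which would suffice for weak double slicing) is strictly stronger, and one hopes this imposes lattice-theoretic constraints on the Goeritz form of $L$ that distinguish the alternating mutant from the rest.

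The main obstacle will be constructing an invariant sensitive to the cyclic order of the twist parameters. All the classical link invariants -- linking number, multivariable Alexander polynomial, the linking form on $\Sigma_2(S^3, L)$, and signatures -- are mutation invariants, and so a priori cannot tell the mutants apart. A promising candidate is an equivariant intersection pairing in $\Sigma_2(B^4)$: the preimages of a candidate pair of disjoint slice disks span classes in $H_2$ carrying a $\mathbb{Z}/2$-action and a specific intersection pattern, and this extra structure may detect the cyclic ordering. A practical backup is to generalize Proposition~\ref{graphprop} to the link setting, incorporating the observation already noted in the text that the two unpaired vertices of the link's auxiliary graph both correspond to attachments on the same component of $P(a,-a)$: one would try to show that no choice of $\mathcal{A}$ and $\mathcal{B}$ bands for a non-alternating $\underline\mu$ produces a graph whose combinatorics are consistent with the link version of Theorem~\ref{crit}. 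This would rule out every band-based strong double slicing of a non-alternating mutant, though further work would be required to show that every strong double slicing of an odd pretzel link must come from such a band decomposition.
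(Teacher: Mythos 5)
This statement is labeled a \emph{Conjecture} in the paper, and the paper offers no proof of it: the surrounding discussion only observes that the natural band construction (the link analogue of the $\mathcal{A}$ and $\mathcal{B}$ bands) fails to concatenate properly for non-alternating mutants, because edges attached to the two unpaired vertices of the auxiliary graph land on the \emph{same} component of the residual $P(a,-a)$. That is evidence, not an obstruction. Your proposal is likewise a research program rather than a proof, and each of its steps has a genuine gap. First, the reduction to mutants of $P(a,-a,\dots,a,-a)$ is not available off the shelf: Theorem \ref{imc} is stated and proved for odd pretzel \emph{knots} (an odd number of odd parameters), and extending the Issa--McCoy lattice-embedding classification to the even-parameter link case is itself open work; you cannot simply cite it ``up to the necessary modifications.'' Second, the proposed upgrade of the branched-cover obstruction from weak to strong double slicing is purely speculative --- no invariant is actually constructed, and as you note yourself, essentially all candidate invariants (linking form, signatures, Alexander polynomials) are mutation-invariant, which is precisely why the problem is hard. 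The paper itself warns that any program obstructing strong double slicing by obstructing weak double slicing ``would fail to see the difference between the two properties,'' and your equivariant-pairing idea is only a hope that some finer structure survives.

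Your backup plan --- generalizing Proposition \ref{graphprop} to show no choice of $n-1$ bands from each family yields a valid strong double slicing --- could at best rule out this particular construction, and you correctly flag that one would still need to show every strong double slicing of an odd pretzel link arises from such a band decomposition. That last step is the crux and there is no known mechanism for it (Theorem \ref{crit} is a sufficient condition, not a necessary one). One small point in your favor: your observation that the componentwise obstruction of Proposition \ref{difference} is useless here is correct, since each component of an odd pretzel link with all-odd parameters is an unknot. But overall the statement remains a conjecture, and your proposal does not close it.
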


We do not have as good of a sense for the weak double slicings, but the author's best guess is that these are the only ones.
\section*{Acknowledgements.} Thanks to my advisor Joshua Greene for helpful conversations in preparing this note and for indicating the line of the argument for Proposition \ref{graphprop}. Thanks to Joshua, Andrew Donald, and Duncan McCoy for pointing out the example in Proposition \ref{difference}. Thanks also to Fraser Binns, for showing this problem to me, and to Patrick Orson, for helpful comments.
\bibliographystyle{plain}
\bibliography{pretzel}

\end{document}